\documentclass[a4paper,11pt,reqno]{article} 
\usepackage{datetime} 
\usepackage{amsmath}
\allowdisplaybreaks[1]
\usepackage{amsthm}
\usepackage{dsfont}
\usepackage{enumerate}
\usepackage{amssymb}
\usepackage{tikz}
\usepackage{appendix}
\usepackage{graphicx}
\usetikzlibrary{positioning}
\usepackage{multicol}
\usepackage{enumitem}
\usepackage{float}
\usepackage{authblk}
\usepackage[colorinlistoftodos]{todonotes}
\usepackage{accents}
\usepackage{hyperref}
\usepackage{soul}
\hypersetup{
pdftitle={Segre products of cluster algebras},
pdfauthor={Jan E. Grabowski and Lauren Hindmarch},
pdfstartview=FitH
}
\tikzset{node distance=2cm, auto}
\usepackage{caption}
\captionsetup{justification=centering}

\newtheorem{theorem}{Theorem}[section]

\newtheorem{example}[theorem]{Example}
\newtheorem{corollary}[theorem]{Corollary}
\newtheorem{lemma}[theorem]{Lemma}

\newtheorem{remark}[theorem]{Remark}

\newcommand{\bigdsum}{\bigoplus}
\newcommand{\bK}{\mathbb{K}}
\newcommand{\cA}{\mathcal{A}}
\newcommand{\cluster}[2][x]{\underline{#1}_{#2}}
\newcommand{\defeq}{\stackrel{\scriptscriptstyle{\mathrm{def}}}{=}}

\newcommand{\exch}[1]{\underline{\textup{ex}}_{#1}}
\renewcommand{\hat}{\widehat}
\DeclareMathOperator{\Image}{Im}
\newcommand{\inj}{\hookrightarrow}
\newcommand{\integ}{\mathbb{Z}}
\newcommand{\nat}{\mathbb{N}}

\newcommand{\proj}[1]{\mathbb{P}^{#1}}
\newcommand{\tensor}{\otimes}
\newcommand\thickbar[1]{\accentset{\rule{.5em}{.5pt}}{#1}}
\newcommand{\segre}{\thickbar{\tensor}}
\newcommand{\SegreCA}[2]{\cA_{#1}\overline{\tensor} \cA_{#2}}
\newcommand{\union}{\cup}

%Edit comments

%\newcommand{\LHeditcomment}[1]{\todo[inline, color=blue!10]{LH: #1}}

\title{Segre products of cluster algebras}
\author[1]{Jan E. Grabowski}
\author[2]{Lauren Hindmarch}
\affil[1,2]{School of Mathematical Sciences, Lancaster University, Lancaster, LA1 4YF, United Kingdom}

\date{11th September 2024}

\begin{document}

\maketitle

\footnotetext[1]{Email: \url{j.grabowski@lancaster.ac.uk}.  Website: \url{http://www.maths.lancs.ac.uk/~grabowsj/}}
\footnotetext[2]
{Email: \url{l.hindmarch@lancaster.ac.uk}.}
\renewcommand{\thefootnote}{\arabic{footnote}}
\setcounter{footnote}{0}

\begin{abstract}
    \noindent We show that under mild assumptions the Segre product of two graded cluster algebras has a natural cluster algebra structure.
    
    \begin{description}
    	\item[MSC:] 13F60
    \end{description}
\end{abstract}

\section{Introduction}

The map \(\sigma: \proj{n} \times \proj{m} \inj \proj{n+m+nm}\) of projective spaces defined by 
\[ \sigma((x_0\colon \dotsc \colon x_n), (y_0 \colon \dotsc \colon y_m)) = (x_0y_0\colon x_1\colon y_0\colon  \dotsc \colon x_ny_m) \]
is known as the \textit{Segre embedding}---it is injective and its image is a subvariety of \(\proj{n+m+nm}\). We may then define the Segre product of two projective varieties \(X \subseteq \proj{n}\) and \(Y \subseteq \proj{m}\) as the image of \(X \times Y\) with respect to the Segre embedding. We denote the Segre product by \(X \segre Y \defeq \sigma(X \times Y)\).

In what follows, rather than the geometric setting described above, we will be interested in the dual notion of the Segre product of graded algebras. Let \(A=\bigdsum_{i\in \nat} A_{i} \) and \(B=\bigdsum_{i\in \nat} B_{i} \) be \(\nat\)-graded $\bK$-algebras. Then their \textup{Segre product} \(A \segre B\) is the \(\nat\)-graded algebra
    \begin{equation}\label{eq:segre-def} A \segre B \defeq \bigdsum_{i \in \nat} A_i \tensor_{\bK} B_i\end{equation}
    with the usual tensor product algebra multiplication. Letting \(X\) and \(Y\) be projective varieties with homogeneous coordinate rings \(A\) and \(B\) respectively, the Segre product \(A \segre B\) is the homogeneous coordinate ring of \(X \segre Y\). 

Cluster algebras are a class of combinatorially rich algebras arising in a number of algebraic and geometric contexts (see \cite{FWZ1} and references therein).  The additional data of a cluster structure leads to the existence of canonical bases, closely related to the canonical bases arising in Lie theory.  Important examples of cluster algebras of this type include coordinate algebras of projective varieties and their various types of cells, e.g. Grassmannians (\cite{Scott-Grassmannians}), Schubert cells (\cite{GLS-KacMoody}) and positroid varieties (\cite{GL}).

In all known examples when the cluster algebra is the coordinate algebra of a projective variety, we have a compatible grading on the cluster algebra, with all cluster variables being homogeneous.  Such cluster algebras are naturally called \emph{graded cluster algebras} and the general theory of these is set out in work of the first author (\cite{GradedCAs}).

In this note, inspired by \cite[Remark 4.14]{P}, we define a cluster algebra structure on the Segre product of graded cluster algebras.  This generalises the particular case arising in \cite{P} in the study of cluster algebra structures on positroid varieties and in doing so, we are able to clarify the required input data to be able to form a Segre product.  

We show that from the point of view of cluster algebras, forming the Segre product is given by a gluing operation on suitable frozen variables.  We also record some simple observations on the preservation or otherwise of cluster-algebraic properties under taking Segre products.

%As we will see, the standard Segre construction imposes significant restrictions on both the graded cluster algebras and the choice of clusters at which one can glue.  The latter is perhaps surprising since most cluster algebra constructions are agnostic as to choices of initial cluster.  

\subsection*{Acknowledgements}

The authors acknowledge financial support from the Engineering and Physical Sciences Research Council (studentship ref.\ 2436773,\linebreak \url{https://gtr.ukri.org/projects?ref=studentship-2436773}).  We are also very grateful to the anonymous referee for many helpful suggestions that have greatly improved the exposition of our results.

\section{Segre Products of Graded Cluster Algebras}

It was shown by Galashin and Lam in \cite{GL} that coordinate rings of positroid varieties in the Grassmannian have cluster algebra structures. This class is closed under Segre product and in \cite{P}, Pressland shows how the Galashin--Lam cluster structure on the product is related to that on the factors.  

In what follows, we aim to generalise this construction to the case of graded skew-symmetric cluster algebras: we start with two graded cluster algebras and show that their Segre product has a natural cluster structure.  For coordinate rings of positroid varieties, Pressland's result shows that the Galashin--Lam cluster structure on the product is equal to that obtained by the Segre product construction we give here.

We start by establishing some notation; readers unfamiliar with graded cluster algebras may wish to refer to \cite{GradedCAs} for further details and examples.

First, let \(\cA_i = (\cluster{i}, \exch{i}, B_i, G_i)\) be (skew-symmetric) graded cluster algebras, for \(i\in \{1,2\}\), such that
\begin{itemize}
    \item \(\cluster{1} = \{ x_1, \dotsc, x_{n_1} \}\) and \(\cluster{2} = \{y_1, \dotsc, y_{n_2}\}\) are the respective initial clusters;
    \item \(\exch{i} \subsetneq \cluster{i}\) is the set of mutable cluster variables;
    \item every frozen variable (i.e.\ those elements in $\cluster{i}\setminus \exch{i}$) is invertible;
    \item \(B_i\) is an exchange matrix (with rows indexed by $\cluster{i}$ and columns by $\exch{i}$)  with skew-symmetric principal part;
    \item \(G_i \in \mathbb{Z}^{n_{i}}\) is a grading vector, i.e.\ a vector such that \(B_i^TG_i=0\).
\end{itemize}

It is common to visualise cluster mutation using quivers; see e.g. \cite{Marsh-book}.  We will do the same: an exchange matrix $B_{i}$ will be represented by an ice quiver having vertices $\bullet$ labelled by the elements of $\cluster{i}$. The frozen variables indicated by a box $\Box$, to indicate that mutation is not carried out there.  Arrows are determined by $B_{i}$: the number of arrows from $x_{j}$ to $x_{k}$ is $(B_{i})_{jk}$.

Throughout, we will work over a field $\bK$, so that our cluster algebras are $\bK$-algebras and we take all tensor products to be over $\bK$.  As we will see, the underlying field plays essentially no role in our construction.

Let $\cluster{}$ be a cluster with \(x\) a cluster variable and $B$ the exchange matrix associated to $\cluster{}$. We denote by \(B^{x}\) the row of \(B\) indexed by $x$ and by \(\hat{B}^{x}\) the matrix obtained from \(B\) by removing the row \(B^{x}\).  If $x$ is frozen, \(\hat{B}^{x}\) is again an exchange matrix.

\begin{remark}
In the above we require at least one frozen cluster variable in each cluster algebra---this will be important when defining a cluster structure on their Segre product since this will involve `gluing' at frozen variables.

We have also asked that every frozen variable is invertible, which is a common but not universal assumption in cluster theory.  In fact, an examination of our construction shows that this assumption can be weakened to only asking that the glued frozen variables are invertible, which may be a more appropriate assumption for geometric applications.
\end{remark}

We wish to define a cluster algebra structure on the Segre product \(\SegreCA{1}{2}\). Following the approach of \cite{P}, we aim to construct a new cluster algebra from \(\cA_1\) and \(\cA_2\) by gluing at frozen variables of the same degree, which we will show coincides with the Segre product under suitable further conditions. 

\subsection{A gluing construction}

Fix \(x \in \cluster{1}\setminus \exch{1}\) and \(y \in \cluster{2}\setminus \exch{2}\) such that $(G_{1})_{x}=(G_{2})_{y}$.  That is, $x$ and $y$ are frozen variables in their respective clusters and their degrees are equal.  We will identify the frozen variables \(x\) and \(y\), denoting a new proxy variable replacing both of these by \(z\). 

The initial data for our new cluster algebra is as follows. For the initial cluster, we take
\[\cluster{1}\Box \cluster{2} \defeq (\cluster{1}\setminus\{x\}) \union (\cluster{2}\setminus\{y\}) \union \{z\}.\]
The mutable variables are \(\exch{1} \union \exch{2}\), and for the initial exchange matrix, we form the block matrix
\[ B_{1}\Box B_{2} \defeq \begin{bmatrix} \hat{B}_1^{x} & 0 \\
0 & \hat{B}_2^{y} \\
B_{1}^{x} & B_{2}^{y}   
\end{bmatrix}.\]
Finally, for the initial grading vector we take
\[G_{1}\Box G_{2} \defeq \begin{bmatrix} \hat{G}_{1}^{x} \\ \hat{G}_{2}^{y} \\ G_{1}^{z} \end{bmatrix} \]
where \(\hat{G}_{1}^{x}\) is the grading vector \(G_{1}\) with the entry indexed by \(x\) removed (and similarly for $\hat{G}_{2}^{y}$) and \(G_{1}^{z} \defeq (G_1)_{x}=(G_{2})_{y}\). We can now define a cluster algebra \[ \cA_{1} \Box \cA_{2} = \cA(\cluster{1}\Box \cluster{2},\exch{1}\union \exch{2},B_{1}\Box B_{2},G_{1}\Box G_{2})\] from this initial data.

Let us extend the above notation to write
\[ \cluster{1}'\Box \cluster{2}' = (\cluster{1}'\setminus\{x\}) \union (\cluster{2}'\setminus\{y\}) \union \{z\}, \]
where $\cluster{1}'$, $\cluster{2}'$ are now allowed to be any clusters of $\cA_{1}$ and $\cA_{2}$, respectively, and say that $\cluster{1}'\Box \cluster{2}'$ is obtained by gluing $x$ and $y$. This is well-defined since $x$ and $y$ are frozen. Similarly, we extend the notation $B_{1}\Box B_{2}$ and $G_{1}\Box G_{2}$ to any appropriate input matrices/vectors.

The process of gluing at frozen variables with matching degree is illustrated in the example below. Here and elsewhere, $\mathds{1}$ denotes the vector $(1,\dotsc ,1)^{T}$.

\begin{example}
Let \(\cA_1 = (\cluster{1}= \{x_1,x_2,x_3\}, \exch{1}=\{x_{1}\}, Q_1, G_1=\mathds{1})\) and \(\cA_2 = (\cluster{2} = \{y_1,y_2,y_3\}, \exch{2}=\{y_{1}\}, Q_2, G_1=\mathds{1})\) be cluster algebras with exchange quivers as follows:
\begin{center}\begin{tikzpicture}
    \node[rectangle, draw,label=below:{$x_2$}] (11) {};

    \node[] (q1) [left of=11, xshift=25] {$Q_1:$};
    
    \node[fill=black, draw,circle, inner sep=2pt, label=below:{$x_1$}] (12) [right of=11] {};
    \node[rectangle, draw, label=below:{$x_3$}] (13) [right of=12] {};
    \node[rectangle, draw, label=below:{$y_3$}] (21) [right of=13, xshift=12] {};

    \node[] (q2) [left of=21, xshift=25] {$Q_2:$};
    
    \node[fill=black, draw,circle, inner sep=2pt,label=below:{$y_1$}] (22) [right of=21] {};
    \node[rectangle, draw, label=below:{$y_2$}] (23) [right of=22] {};

    \draw[-latex] (11) to node {} (12);
    \draw[-latex] (12) to node {} (13);
    \draw[-latex] (21) to node {} (22);
    \draw[-latex] (22) to node {} (23);
\end{tikzpicture}\end{center}
The quiver obtained by `gluing' at the frozen variables \(x_3\) and \(y_3\) is shown below---we denote the new variable by \(z\). 
\begin{center}\begin{tikzpicture}
    \node[rectangle, draw, label=below:{$x_2$}] (11) {};

    \node[] (q1) [left of=11, xshift=25] {$Q:$};
    
    \node[fill=black, draw,circle, inner sep=2pt, label=below:{$x_1$}] (12) [right of=11] {};
    \node[rectangle, draw, label=below:{$z$}] (13) [right of=12] {};
    
    \node[fill=black, draw,circle, inner sep=2pt, label=below:{$y_1$}] (22) [right of=13] {};
    \node[rectangle, draw, label=below:{$y_2$}] (23) [right of=22] {};

    \draw[-latex] (11) to node {} (12);
    \draw[-latex] (12) to node {} (13);
    \draw[-latex] (13) to node {} (22);
    \draw[-latex] (22) to node {} (23);
\end{tikzpicture}\end{center}
The cluster algebra \(\cA_{1} \Box \cA_{2}\) is then given by the initial data \[ (\cluster{} = \{x_1, x_2, y_1, y_2, z\},\exch{} = \{x_{1},y_{1}\}, Q, G=\mathds{1}). \] We will show in Theorem \ref{t:iso-to-Segre} that this gives a cluster structure on the Segre product \(\SegreCA{1}{2}\).
\end{example}

We record some straightforward observations about the cluster algebra $\cA_{1} \Box \cA_{2}$.

\begin{lemma}\label{l:box-prod-comm}
    Let $\cA_{1}$ and $\cA_{2}$ be graded cluster algebras.  Fix \(x \in \cluster{1}\setminus \exch{1}\) and \(y \in \cluster{2}\setminus \exch{2}\) such that $(G_{1})_{x}=(G_{2})_{y}$.  Then the cluster algebras $\cA_{1}\Box \cA_{2}$ and $\cA_{2}\Box \cA_{1}$ are isomorphic as cluster algebras.
\end{lemma}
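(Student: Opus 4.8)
The plan is to observe that $\cA_{2}\Box \cA_{1}$ is obtained from $\cA_{1}\Box \cA_{2}$ purely by reordering the initial data, and then to invoke the standard fact that relabelling the indices of a seed produces an isomorphic cluster algebra. Indeed, both constructions share exactly the same underlying initial cluster, namely the set $(\cluster{1}\setminus\{x\})\union(\cluster{2}\setminus\{y\})\union\{z\}$, and the same set of mutable variables $\exch{1}\union \exch{2}$; only the order in which the two blocks are written down differs. So the claimed isomorphism should be essentially the identity on cluster variables, implemented on matrices by a single block-swapping permutation.

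Concretely, I would introduce the permutation $\pi$ of the index set that interchanges the block indexed by $\cluster{1}\setminus\{x\}$ with the block indexed by $\cluster{2}\setminus\{y\}$ while fixing the index of $z$. First I would check that applying $\pi$ simultaneously to the rows and columns of $B_{1}\Box B_{2}$ yields $B_{2}\Box B_{1}$: the diagonal blocks $\hat{B}_{1}^{x}$ and $\hat{B}_{2}^{y}$ are swapped, the off-diagonal zero blocks are preserved, and the bottom row $(B_{1}^{x}\ B_{2}^{y})$ becomes $(B_{2}^{y}\ B_{1}^{x})$, which is precisely the bottom row of $B_{2}\Box B_{1}$.

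The point requiring care is that $\pi$ really is an admissible relabelling of a seed, i.e.\ that it restricts to the same permutation of the mutable labels whether read along the rows or along the columns. This holds because $x$ and $y$ are frozen, so $\exch{1}\subseteq \cluster{1}\setminus\{x\}$ and $\exch{2}\subseteq \cluster{2}\setminus\{y\}$; hence $\pi$ sends mutable indices to mutable and frozen to frozen, and its restriction to $\exch{1}\union \exch{2}$ is the same block swap in both guises. I would then note that $\pi$ carries $G_{1}\Box G_{2}$ to $G_{2}\Box G_{1}$, which is immediate since the two grading vectors agree entry-by-entry up to the block swap and share the common value $G_{1}^{z}=(G_{1})_{x}=(G_{2})_{y}$ in the $z$-slot.

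Finally, since matrix mutation is equivariant under such a relabelling, applying $\pi$ to each initial seed extends to a bijection of the entire seed pattern that commutes with mutation and preserves the grading, inducing the required isomorphism $\cA_{1}\Box \cA_{2}\xrightarrow{\sim}\cA_{2}\Box \cA_{1}$ of graded cluster algebras. I expect no genuine obstacle beyond this bookkeeping: the only subtlety is confirming the simultaneous row/column consistency of $\pi$ on the mutable part, and that is guaranteed precisely by $x$ and $y$ being frozen.
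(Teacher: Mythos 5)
Your proposal is correct and follows the same approach as the paper, which simply notes that the two sets of initial data agree up to a permutation of entries; you have merely spelled out the block-swapping permutation and its compatibility with the exchange matrix, grading vector, and mutable/frozen partition in more detail.
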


\begin{proof}
    This is clear from comparing the initial data for $\cA_{1}\Box \cA_{2}$ and $\cA_{2}\Box \cA_{1}$ and in particular noting that the two initial clusters are equal up to permutation of the entries.
\end{proof}

\begin{lemma}\label{l:box-prod-var-cl-corresp}
    Let $\cA_{1}$ and $\cA_{2}$ be graded cluster algebras. Fix \(x \in \cluster{1}\setminus \exch{1}\) and \(y \in \cluster{2}\setminus \exch{2}\) such that $(G_{1})_{x}=(G_{2})_{y}$. 

    \begin{enumerate}[label=\textup{(\roman*)}]
        \item\label{l:box-prod-var-corresp} Every cluster variable of $\cA_{1}\Box \cA_{2}$ is naturally identified with a cluster variable of $\cA_{1}$, a cluster variable of $\cA_{2}$ or is equal to $z$.
        \item\label{l:box-prod-cl-corresp} There is a bijection between pairs of clusters $(\cluster{1}',\cluster{2}')$ and clusters of $\cA_{1}\Box \cA_{2}$ given by gluing, i.e.\ sending $(\cluster{1}',\cluster{2}')$ to $\cluster{1}'\Box \cluster{2}'$ for a cluster $\cluster{1}'$ of $\cA_{1}$ and $\cluster{2}'$ of $\cA_{2}$.
    \end{enumerate} 
\end{lemma}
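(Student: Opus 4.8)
The crux of both parts is that the exchange matrix $B_{1}\Box B_{2}$ has no entries linking the mutable variables $\exch{1}$ to the rows indexed by $\cluster{2}\setminus\{y\}$, nor $\exch{2}$ to the rows indexed by $\cluster{1}\setminus\{x\}$; in quiver terms, there are no arrows between the mutable part coming from $\cA_{1}$ and that coming from $\cA_{2}$. My first step would be to prove that this block structure persists under mutation. Concretely, I would check directly from the matrix mutation rule that mutating $B_{1}\Box B_{2}$ at a vertex $k\in\exch{1}$ leaves the bottom-right block $\hat{B}_{2}^{y}$ and the $\exch{2}$-entries of the $z$-row untouched, keeps the two off-diagonal blocks zero, and transforms the remaining entries --- those in rows $(\cluster{1}\setminus\{x\})\union\{z\}$ and columns $\exch{1}$ --- exactly as the corresponding entries of $B_{1}$ transform under mutation at $k$ in $\cA_{1}$, with the $z$-row playing the role of the $x$-row. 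The key point making the off-diagonal correction terms vanish is that the mutation correction at $k$ involves products $b_{ik}b_{kj}$, and for $k\in\exch{1}$ the $k$-row has zero entries in all $\exch{2}$-columns, while the $k$-column has zero entries in all $(\cluster{2}\setminus\{y\})$-rows. The symmetric statement holds for mutation at a vertex of $\exch{2}$.

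For part \ref{l:box-prod-var-corresp}, I would combine this with the observation that the exchange relation at $k\in\exch{1}$ reads off only the entries of the $k$-column, which are supported on the rows $(\cluster{1}\setminus\{x\})\union\{z\}$. Hence the new variable produced by mutating at $k$ in $\cA_{1}\Box\cA_{2}$ is literally the new variable produced by mutating at $k$ in $\cA_{1}$, after identifying $z$ with $x$ (and symmetrically $z$ with $y$ for mutations in $\exch{2}$). An induction on the length of the mutation sequence then shows that every cluster variable of $\cA_{1}\Box\cA_{2}$ is either $z$, or is obtained by a sequence of mutations lying entirely within $\exch{1}$ (resp.\ $\exch{2}$) and so is naturally identified with a cluster variable of $\cA_{1}$ (resp.\ $\cA_{2}$).

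For part \ref{l:box-prod-cl-corresp}, the persistence of the zero off-diagonal blocks means that for $k\in\exch{1}$ and $l\in\exch{2}$ there is never an arrow between $k$ and $l$, so by the standard fact that mutations at non-adjacent vertices commute, mutation at $k$ and mutation at $l$ commute wherever they are applied. Consequently any finite mutation sequence reaching a given cluster of $\cA_{1}\Box\cA_{2}$ may be reordered so that all $\exch{1}$-mutations precede all $\exch{2}$-mutations; by the first step the former assemble into a mutation sequence realising some cluster $\cluster{1}'$ of $\cA_{1}$ and the latter into one realising some cluster $\cluster{2}'$ of $\cA_{2}$, and the resulting cluster is exactly $\cluster{1}'\Box\cluster{2}'$. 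Conversely, given clusters $\cluster{1}'$ and $\cluster{2}'$, I would realise each by a mutation sequence in its own factor and run the concatenation in $\cA_{1}\Box\cA_{2}$ to obtain $\cluster{1}'\Box\cluster{2}'$, giving surjectivity of the gluing map onto clusters. Injectivity is immediate from part \ref{l:box-prod-var-corresp}: the variables coming from $\cA_{1}$ and from $\cA_{2}$ are distinct apart from the shared $z$, so $\cluster{1}'$ and $\cluster{2}'$ are recovered from $\cluster{1}'\Box\cluster{2}'$ by restricting to each factor and replacing $z$ by $x$, respectively $y$.

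The main obstacle I anticipate is the first step: verifying cleanly that the block structure --- and in particular the invariance of the two zero blocks --- is genuinely preserved under mutation, since this single fact underpins both the variable identification in part \ref{l:box-prod-var-corresp} and the commutation of $\exch{1}$- and $\exch{2}$-mutations in part \ref{l:box-prod-cl-corresp}. Once that invariance is established, both parts follow formally.
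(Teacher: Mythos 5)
Your proposal is correct and follows essentially the same route as the paper: the paper's proof rests on exactly your key observation that gluing introduces no arrows between the two sets of mutable vertices, so that (by locality of mutation) mutations in $\exch{1}$ and $\exch{2}$ are independent and produce the same variables as if no gluing had occurred, with the cluster bijection then following as for the disconnected product $\cA_{1}\times\cA_{2}$. Your write-up simply makes explicit, via the matrix mutation rule and the vanishing of the correction terms $b_{ik}b_{kj}$ across the zero blocks, what the paper leaves as ``straightforward to see''.
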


\begin{proof}
    This follows from observing that our gluing process does not introduce any new arrows between mutable vertices.  Since mutation is a local phenomenon and concentrated on mutable vertices, it is straightforward to see that mutating at vertices indexed by $\exch{1}$ is independent of mutating at vertices indexed by $\exch{2}$ and the (mutable) variables obtained are exactly as if the gluing had not been carried out. The frozen variables of $\cA_{1}\Box \cA_{2}$ are those of $\cA_{1}$ and $\cA_{2}$ excluding $x$ and $y$, along with the glued frozen $z$.
    
    For the second part, note that the same argument shows that there is a similar bijection for the clusters of $\cA_{1}\times \cA_{2}$, where the latter denotes the ``disconnected'' product of cluster algebras, where one simply takes the union of clusters and direct sum of exchange matrices.  Now there is evidently a bijection between the clusters of $\cA_{1}\times \cA_{2}$ and those of $\cA_{1}\Box \cA_{2}$, given by $\cluster{1}'\union \cluster{2}' \mapsto \cluster{1}'\Box \cluster{2}'$, from which the claim follows.
\end{proof}

\begin{corollary}\label{c:box-prod-fin-type}
	Let $\cA_{1}$ and $\cA_{2}$ be graded cluster algebras. Fix \(x \in \cluster{1}\setminus \exch{1}\) and \(y \in \cluster{2}\setminus \exch{2}\) such that $(G_{1})_{x}=(G_{2})_{y}$. 
	
	Then 
	\begin{enumerate}[label=\textup{(\roman*)}]
		\item $\cA_{1}\Box \cA_{2}$ is of finite type if and only if $\cA_{1}$ and $\cA_{2}$ are;
		\item writing $\kappa(\cA)$ for the number of cluster variables of a cluster algebra $\cA$, we have $\kappa(\cA_{1}\Box \cA_{2})=\kappa(\cA_{1})+\kappa(\cA_{2})-1$ when these numbers are all finite; and
		\item writing $K(\cA)$ for the number of clusters of $\cA$, we have $K(\cA_{1}\Box \cA_{2})=K(\cA_{1})K(\cA_{2})$ when these numbers are all finite.
	\end{enumerate} 
\end{corollary}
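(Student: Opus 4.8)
The plan is to deduce all three parts directly from the correspondences of Lemma \ref{l:box-prod-var-cl-corresp}, so that no fresh analysis of mutation is required. Parts \textup{(i)} and \textup{(iii)} both rest on the cluster bijection of Lemma \ref{l:box-prod-var-cl-corresp}\ref{l:box-prod-cl-corresp}, which matches the clusters of $\cA_{1}\Box \cA_{2}$ with the pairs $(\cluster{1}',\cluster{2}')$, where $\cluster{1}'$ runs over clusters of $\cA_{1}$ and $\cluster{2}'$ over clusters of $\cA_{2}$. The number of such pairs is $K(\cA_{1})K(\cA_{2})$, which gives \textup{(iii)} immediately once these quantities are assumed finite.

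For \textup{(i)}, I would use that a (skew-symmetric) cluster algebra is of finite type precisely when it has finitely many clusters, and apply the same bijection without any finiteness assumption: the set of pairs $(\cluster{1}',\cluster{2}')$ is finite if and only if both $\cA_{1}$ and $\cA_{2}$ have finitely many clusters, since each factor always contributes at least one cluster. Hence $\cA_{1}\Box \cA_{2}$ is of finite type if and only if both $\cA_{1}$ and $\cA_{2}$ are. Because this argument invokes only the bijection and not the cardinality formula, there is no circularity with \textup{(iii)}.

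Finally, for \textup{(ii)} I would partition the cluster variables of $\cA_{1}\Box \cA_{2}$ using Lemma \ref{l:box-prod-var-cl-corresp}\ref{l:box-prod-var-corresp}. The mutable cluster variables form the disjoint union of those of $\cA_{1}$ and those of $\cA_{2}$, because mutation at the vertices of $\exch{1}$ and at those of $\exch{2}$ proceed independently and reproduce exactly the mutable variables of the respective factors. The frozen variables are those of $\cA_{1}$ other than $x$, those of $\cA_{2}$ other than $y$, and the single glued variable $z$. Summing these counts gives $\kappa(\cA_{1})+\kappa(\cA_{2})$ adjusted by removing $x$ and $y$ and restoring a single $z$, for a net contribution of $-1$, which yields $\kappa(\cA_{1})+\kappa(\cA_{2})-1$. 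The only delicate point, and the closest thing to an obstacle, is the bookkeeping here: one must confirm that $x$ and $y$ are each deleted exactly once and that $z$ is counted exactly once and not conflated with any mutable variable. As $x$, $y$ and $z$ are all frozen, and frozen and mutable variables are disjoint by construction, this is routine, but it is precisely where the $-1$ arises.
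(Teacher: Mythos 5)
Your proposal is correct and matches the paper's approach: the paper likewise deduces all three parts directly from Lemma~\ref{l:box-prod-var-cl-corresp}, noting that the single reduction by one in the cluster variable count comes from gluing the two frozen variables $x$ and $y$ into $z$. You have simply written out in more detail what the paper declares immediate.
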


\begin{proof} These are now immediate from the previous lemma.  Note that there is an overall reduction of one in the number of cluster variables because we have glued two previously distinct frozen variables; this highlights the difference between this construction and the disconnected product.
\end{proof}

\begin{remark}
    One might hope that this construction extends straightforwardly to graded quantum cluster algebras (cf.\ \cite{GradedQCAs}).  However, computation in small examples shows that this is not the case.  
    
    For if one tries the na\"{\i}ve approach in which initial quantum cluster variables from $\cA_{1}$ \emph{commute} with those from $\cA_{2}$, one rapidly finds situations in which after performing a mutation, the new variable does not quasi-commute with the rest of its cluster.  For it to do so requires the compatibility condition between the exchange and quasi-commutation matrices for the glued data and this imposes a collection of ``cross-term'' requirements between $B_{1}$ and $L_{2}$ (respectively, $B_{2}$ and $L_{1}$) in respect of the glued frozen variables.
\end{remark}

\subsection{Relationship with the Segre product}

Our main result is the following theorem, showing that the cluster algebra construction $\cA_{1}\Box \cA_{2}$ induces a cluster algebra structure on the Segre product.  The isomorphism we will use is directly analogous to the map \(\delta^{\text{src}}\) defined in \cite{P}.

\begin{theorem}\label{t:iso-to-Segre}
    Let $\cA_{i}=(\cluster{i},\exch{i},B_{i},G_{i})$, $i=1,2$ be graded cluster algebras such that there exist \(x \in \cluster{1}\setminus \exch{1}\) and \(y \in \cluster{2}\setminus \exch{2}\) both of degree 1.
    
    Then the map \(\varphi: \cA_{1}\Box \cA_{2} \to \SegreCA{1}{2}\) given on initial cluster variables by
\begin{align*}
    \varphi(x_{j}) &= x_j \tensor y^{\deg x_{j}} \quad \text{for } x_{j} \in \cluster{1}\setminus \{x\}, \\
    \varphi(y_{j}) &= x^{\deg y_{j}} \tensor y_j \quad \text{for } y_{j} \in \cluster{2}\setminus \{y\}\ \text{and}\\
    \varphi(z) &= x \tensor y
\end{align*}
    is a graded algebra isomorphism, with the property that the above formul\ae\ hold for any cluster of $\cA_{1}\Box \cA_{2}$.
\end{theorem}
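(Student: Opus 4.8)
The plan is to realise $\varphi$ first as a homomorphism of ambient fields and then show it restricts to an isomorphism onto the Segre product. Since the initial cluster $\cluster{1}\Box\cluster{2}$ consists of $n_1+n_2-1$ algebraically independent elements generating the ambient field $\mathcal F$ of $\cA_1\Box\cA_2$, I would first check that their prescribed images are algebraically independent inside $\bK(x_1,\dots,x_{n_1},y_1,\dots,y_{n_2})$, where $a\otimes b$ is identified with the product $ab$ (this identification is injective on $\cA_1\otimes\cA_2$ because the $x$'s and $y$'s are algebraically independent). Viewing each image as a Laurent monomial, algebraic independence is equivalent to linear independence of the exponent vectors; reading off the columns indexed by the $x_j\neq x$, then the $y_j\neq y$, then $x$, shows the $(n_1+n_2-1)\times(n_1+n_2)$ exponent matrix has full rank. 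Hence $\varphi$ extends to an injective field homomorphism $\mathcal F\hookrightarrow\bK(x_1,\dots,x_{n_1},y_1,\dots,y_{n_2})$.

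Next I would prove the key structural claim that the stated formulae persist on every cluster, by induction on mutation distance from the initial seed. By Lemma \ref{l:box-prod-var-cl-corresp} the gluing introduces no arrows between $\exch{1}$- and $\exch{2}$-vertices, so the exchange matrix at any cluster retains the block form of $B_1\Box B_2$; consequently a mutation at a vertex $k\in\exch{1}$ has an exchange relation $a_k a_k' = M^++M^-$ whose monomials $M^\pm$ involve only the $\cluster{1}$-side variables together with $z$, and it is exactly the $\cA_1$ exchange relation with $x$ replaced by $z$. Applying $\varphi$ and the inductive hypothesis, each $\varphi(v)=v\otimes y^{\deg v}$ (with $z\mapsto x\otimes y = x\otimes y^{\deg x}$ since $\deg x=1$), so $\varphi(M^\pm)=N^\pm\otimes y^{\deg M^\pm}$ where $N^\pm$ are the corresponding $\cA_1$-monomials. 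Here the grading condition $B_1^TG_1=0$, equivalently $(B_1\Box B_2)^T(G_1\Box G_2)=0$, is essential: it forces $\deg M^+=\deg M^-=\deg a_k+\deg a_k'$, so both summands carry the same power of $y$ and one may factor $\varphi(M^++M^-)=(N^++N^-)\otimes y^{\deg a_k+\deg a_k'}$. Dividing the resulting identity by $\varphi(a_k)=a_k\otimes y^{\deg a_k}$ yields $\varphi(a_k')=a_k'\otimes y^{\deg a_k'}$, completing the step (the $\exch{2}$-case is symmetric). This simultaneously shows that $\varphi$ carries every cluster variable, and hence every inverse frozen variable, into $\cA_1\segre\cA_2$ and that it preserves degree; since $\cA_1\Box\cA_2$ is generated by these, $\varphi$ restricts to an injective graded algebra homomorphism $\cA_1\Box\cA_2\to\cA_1\segre\cA_2$.

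It then remains to prove surjectivity. The formulae show that $\mathrm{Im}(\varphi)$ contains $m\otimes y^{\deg m}$ for every monomial $m$ in cluster variables and inverse frozen variables of $\cA_1$, and dually $x^{\deg m'}\otimes m'$ for such monomials $m'$ in $\cA_2$; taking $\bK$-linear combinations gives $f\otimes y^i\in\mathrm{Im}(\varphi)$ for all $f\in(\cA_1)_i$ and $x^i\otimes g\in\mathrm{Im}(\varphi)$ for all $g\in(\cA_2)_i$. Since the glued frozen $z$ is invertible, $w\defeq\varphi(z)=x\otimes y$ is invertible with $w^{-i}=x^{-i}\otimes y^{-i}\in\mathrm{Im}(\varphi)$, and then the identity $f\otimes g=(f\otimes y^{i})\,w^{-i}\,(x^{i}\otimes g)$ exhibits an arbitrary homogeneous generator $f\otimes g$ of $(\cA_1)_i\otimes(\cA_2)_i$ as a product of elements of the image. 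As $\cA_1\segre\cA_2$ is spanned by such $f\otimes g$, surjectivity follows, and together with the injectivity above $\varphi$ is the desired graded algebra isomorphism.

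I anticipate the inductive persistence of the formulae to be the main obstacle: one must argue that the block structure of the exchange matrix is maintained under arbitrary interleaved mutation sequences—so that $\exch{1}$- and $\exch{2}$-mutations genuinely decouple and reproduce the factor exchange relations—and then deploy homogeneity at precisely the right moment to collapse each binomial into a pure tensor times a power of $y$. By contrast, the algebraic-independence and surjectivity steps are short once the generating elements are in hand. One point worth flagging is that, with invertible frozen variables of positive degree, the relevant grading is a $\integ$-grading and the Segre product is read as $\bigoplus_{i\in\integ}(\cA_1)_i\otimes(\cA_2)_i$; the use of $w^{-i}$ is exactly where invertibility of the glued frozen is needed, consistent with the weakening of the invertibility hypothesis noted in the preceding remark.
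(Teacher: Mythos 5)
Your proposal is correct and follows essentially the same route as the paper's proof: extend $\varphi$ to an injective homomorphism of ambient fields, prove by induction on mutation distance (using Lemma~\ref{l:box-prod-var-cl-corresp} and the grading condition $C^{T}H=0$ to balance the powers of $y$, with $\deg x=\deg y=1$ entering exactly where you say) that the formul\ae\ persist on every cluster, and deduce surjectivity from the identity $f\tensor g=(f\tensor y^{i})(x^{i}\tensor g)(x\tensor y)^{-i}$. The only difference is cosmetic: the paper carries out the mutation step as an explicit computation with the $[\,\cdot\,]_{\pm}$ exponents rather than arguing via homogeneity of the exchange binomial.
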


\begin{proof}
Recalling that we set

\[\cluster{1}\Box \cluster{2} = (\cluster{1}\setminus\{x\}) \union (\cluster{2}\setminus\{y\}) \union \{z\},\] let $\varphi$ denote the algebra homomorphism $\varphi\colon \bK(\cluster{1}\Box \cluster{2})\to \bK(\cluster{1})\tensor \bK(\cluster{2})$
    obtained from the above specification on generators of the domain.  This map is injective since the elements $\varphi(x_{j})$, $\varphi(y_{j})$ and $\varphi(z)$ are algebraically independent.
    
    Now let $\varphi$ denote the restriction of the above map to $\cA_{1}\Box \cA_{2}$.  We first claim that the restricted map \(\varphi\) takes values in the subalgebra $\cA_{1}\tensor \cA_{2}$. To prove this, we proceed by induction on the number of mutation steps from the initial cluster. 
    
    We may take as base case that of zero mutations from the initial cluster: there is nothing to do, since we see immediately that $\varphi(x_{j})$, $\varphi(y_{j})$ and $\varphi(z)$ lie in $\cA_{1}\tensor \cA_{2}$ by definition.
    
    Now assume that the result holds $r-1$ mutations from the initial cluster $\cluster{}=\cluster{1}\Box \cluster{2}$ (for $r\geq 1$) of $\cA_{1}\Box \cA_{2}$.  That is, let $\underline{y}=\mu_{k_{r-1}}\mu_{k_{r-2}}\dotsm \mu_{k_{1}}(\cluster{})$.  Set $B=\mu_{k_{r-1}}\mu_{k_{r-2}}\dotsm \mu_{k_{1}}(B_{1}\Box B_{2})$.
    
    By Lemma~\ref{l:box-prod-var-cl-corresp}\ref{l:box-prod-cl-corresp}, we have that $\underline{y}=\underline{y}_{1}\Box \underline{y}_{2}$ for some clusters $\underline{y}_{1}$, $\underline{y}_{2}$ of $\cA_{1}$ and $\cA_{2}$ respectively.  Moreover, there is a decompostion 
    \[ \{k_{1},\dotsc,k_{r-1} \}=\{ l_{1},\dotsc ,l_{s}\}\sqcup \{ m_{1},\dotsc, m_{t}\}\]
    such that $\underline{y}_{1}=\mu_{l_{s}}\dotsm \mu_{l_{1}}(\cluster{1})$ and $\underline{y}_{2}=\mu_{m_{t}}\dotsm \mu_{m_{1}}(\cluster{2})$.
    
    Let  \(\underline{y}_{1} = \{ x_1, \dotsc, x_{n_1} \}\) and \(\underline{y}_{2} = \{y_1, \dotsc, y_{n_2}\}\), so that
    \[ \underline{y}=\underline{y}_{1}\Box \underline{y}_{2}=(\{ x_1, \dotsc, x_{n_1} \}\setminus \{ x\}) \sqcup (\{y_1, \dotsc, y_{n_2}\}\setminus \{ y \}) \sqcup \{ z \} \]
    
    Let $C=\mu_{l_{s}}\dotsm \mu_{l_{1}}(B_{1})$, $D=\mu_{m_{t}}\dotsm \mu_{m_{1}}(B_{2})$, $H=\mu_{l_{s}}\dotsm \mu_{l_{1}}(G_{1})$ and $K=\mu_{m_{t}}\dotsm \mu_{m_{1}}(G_{2})$.  Then in particular $B=C\Box D$ and $H_{j}=\deg x_{j}$ and $K_{j}=\deg y_{j}$. We also set $[n]_{+}=\max \{ n,0\}$ and $[n]_{-}=\max \{ -n,0 \}$.
    
    We then compute \(\varphi\) for one further mutation in direction $k_{r}=k$. We first consider the case in which \(x_k \in \underline{y}_{1}\) is mutable. 
    
    %\JGeditcomment{GTH 21/8}
We have
    \begin{align*}
        \varphi( \mu_k(x_k)) &= \varphi \Bigg( \frac{1}{x_k} \Bigg[  \Bigg(\prod\limits_{x_{j}\in \underline{y}_{1}\setminus\{x\}} x_j^{[B_{x_{j},x_{k}}]_{+}}\Bigg) \Bigg(\prod\limits_{y_{j}\in \underline{y}_{2}\setminus\{y\}} y_j^{[B_{y_{j},x_{k}}]_{+}} \Bigg)z^{[B_{z,x_{k}}]_+}  \\ 
        &\qquad \qquad+   \Bigg(\prod\limits_{x_{j}\in \underline{y}_{1}\setminus \{x\}} x_j^{[B_{x_{j},x_{k}}]_{-}}\Bigg) \Bigg(\prod\limits_{y_{j}\in \underline{y}_{2}\setminus \{y\}} y_j^{[B_{y_{j},x_{k}}]_{-}} \Bigg)z^{[B_{z,x_{k}}]_-} \Bigg] \Bigg) \\
        &= \varphi \Bigg( \frac{1}{x_k} \Bigg[  \Bigg(\prod\limits_{x_{j}\in \underline{y}_{1}\setminus\{x\}} x_j^{[B_{x_{j},x_{k}}]_{+}}\Bigg) z^{[B_{z,x_{k}}]_+}  +   \Bigg(\prod\limits_{x_{j}\in \underline{y}_{1}\setminus \{x\}} x_j^{[B_{x_{j},x_{k}}]_{-}}\Bigg) z^{[B_{z,x_{k}}]_-} \Bigg] \Bigg) \\
        &= \varphi \Bigg( \frac{1}{x_k} \Bigg[  \Bigg(\prod\limits_{x_{j}\in \underline{y}_{1}\setminus\{x\}} x_j^{[C_{x_{j},x_{k}}]_{+}}\Bigg) z^{[C_{x,x_{k}}]_+}  +   \Bigg(\prod\limits_{x_{j}\in \underline{y}_{1}\setminus \{x\}} x_j^{[C_{x_{j},x_{k}}]_{-}}\Bigg) z^{[C_{x,x_{k}}]_-} \Bigg] \Bigg) \\
        &= \frac{1}{x_k \otimes y^{\deg x_{k}}} \Bigg[ \prod\limits_{x_{j}\in \underline{y}_{1}\setminus \{x\}}\Bigg( x_j^{[C_{x_{j},x_{k}}]_{+}} \otimes y^{[C_{x_{j},x_{k}}]_{+}\deg x_{j}} \Bigg)x^{[C_{x,x_{k}}]_+}\tensor y^{[C_{x,x_{k}}]_+} \\ 
        & \phantom{=} + \prod\limits_{x_{j}\in \underline{y}_{1}\setminus \{x\}}\Bigg( x_j^{[C_{x_{j},x_{k}}]_{-}} \otimes y^{[C_{x_{j},x_{k}}]_{-}\deg x_{j}} \Bigg)x^{[C_{x,x_{k}}]_-}\tensor y^{[C_{x,x_{k}}]_-} \Bigg] \\
        &= \frac{1}{x_k \otimes y^{\deg x_{k}}} \Bigg[ \prod\limits_{x_{j}\in \underline{y}_{1}} x_j^{[C_{x_{j},x_{k}}]_{+}} \otimes y^{d} + \prod\limits_{x_{j}\in \underline{y}_{1}} x_j^{[C_{x_{j},x_{k}}]_{-}}\otimes y^{d}  \Bigg] \\
        &= \frac{1}{x_k}\Bigg(\prod\limits_{x_{j}\in \underline{y}_{1}} x_j^{[C_{x_{j},x_{k}}]_{+}}+ \prod\limits_{x_{j}\in \underline{y}_{1}} x_j^{[C_{x_{j},x_{k}}]_{-}}\Bigg) \otimes y^{d-\deg x_{k}} \\
        &= \mu_k(x_k) \otimes y^{d-\deg x_{k}} \\
        &= \mu_k(x_k) \otimes y^{\deg \mu_k(x_k)}
    \end{align*}
    where \begin{align*} d & = \sum_{x_{j}} [C_{x_{j},x_{k}}]_{+}\deg x_{j} = \sum_{C_{x_{j},x_{k}}>0} C_{x_{j},x_{k}}H_{x_{j}} \\ & = \sum_{C_{x_{j},x_{k}}<0} -C_{x_{j},x_{k}}H_{x_{j}} = \sum_{x_{j}} [C_{x_{j},x_{k}}]_{-}\deg x_{j} \end{align*} noting that the third equality holds since $C^{T}H=0$.  Also, we use that $\deg \mu_{k}(x_{k})=d-\deg x_{k}$. 

	Note that the fifth equality is where the assumption that $\deg x=1$ is used: without it, the claimed equality of $d$ with the other stated quantities need not hold.

    An analogous argument shows that \(\varphi(\mu_k(y_k)) = x^{\deg \mu_k(y_k)} \otimes \mu_k(y_k)\) for \(y_k \in \underline{y}_{2}\) mutable, noting that this time, it is $\deg y=1$ that is required.
    %Now, let \(\underline{z} = (z_1, \dotsc, z_{n_1+n_2-1})\) be a cluster \(m\) mutation steps away from the initial cluster \(\underline{x}\), i.e. \(\underline{z} = \mu_{\underline{p}}(\underline{x})\) for some mutation path \(\underline{p}\) of length \(m\), and assume that
    %\[\varphi(z_j) = z_j \otimes y_{s_2}^{\deg z_j}\]
    %for \(z_j \in \cA_1\). Denote by \(B'\) the corresponding exchange matrix. We claim that, for \(z_k \in \cA_1\), 
    %\[\varphi(\mu_k(z_k)) = \mu_k(z_k) \otimes y_{s_2}^{\deg \mu_k(z_k)}.\]
%    Indeed, the same calculation as above with $\cluster[z]{}$ and $B'$ shows that this is the case and similarly \(\varphi(\mu_k(z_k)) = x_{s_1}^{\deg \mu_k(z_k)}\otimes \mu_k(z_k)\), when \(z_k \in \cA_2\).
     %The above allows us to define \(\varphi\colon \cA_{1}\Box \cA_{2} \to \cA_{1}\tensor \cA_{2}\) on the generating set of cluster variables as above.  We have seen that this respects the defining (exchange) relations and can therefore be extended to an algebra homomorphism.  It is clearly injective on the generating set of the domain and therefore injective.
     
     Since we have $\deg x=\deg y=1$, the above tells us that for any cluster variable $x'$ of $\cA_{1}\Box \cA_{2}$, we either have $\varphi(x')=x' \tensor y^{\deg x'}$ or $\varphi(x')=x^{\deg x'} \tensor x'$ and hence $\varphi(x')\in (\cA_{1})_{\deg x'} \tensor (\cA_{2})_{\deg x'}$. 
     %    \begin{align*}
     	%    \varphi(x_{j}) &= x_j \tensor y^{\deg x_{j}} \in (\cA_{1})_{\deg x_{j}}\tensor (\cA_{2})_{\deg x_{j}} \\
     	%    \varphi(y_{j}) &= x^{\deg y_{j}} \tensor y_j \in (\cA_{1})_{\deg y_{j}} \tensor (\cA_{2})_{\deg y_{j}}\\
     	%    \varphi(z) &= x \tensor y \in (\cA_{1})_{1} \tensor (\cA_{2})_{1}
     	%\end{align*}
     	That is, the image of $\varphi$ is contained in the Segre product $\SegreCA{1}{2}$ without any further constraints and the map $\varphi$ is a graded map.
     	
	It remains to check surjectivity.  Note that a generating set for $\SegreCA{1}{2}$ is given by taking the elementary tensors with components in generating sets for $\cA_{1}$ and $\cA_{2}$, i.e.\ 
	\[\{z_1 \tensor z_2 | z_1 \in (\cA_1)_d, z_2 \in (\cA_2)_d\ \text{cluster variables}, d\in \integ \}\]     
	
	Now
	\begin{align*}
		z_1 \tensor z_2 &= (z_1 \tensor y^{d})(x^{d} \tensor z_2)(x^{-d} \tensor y^{-d}) = \varphi(z_1)\varphi(z_2)\varphi(z)^{-d}.
	\end{align*}
	Hence, \(\Image \varphi\) contains a generating set for \(\SegreCA{1}{2}\), and so \(\varphi\) is surjective onto $\SegreCA{1}{2}$. The claim follows.     	
\end{proof}

\begin{remark} One might be tempted to try changing the specification of the map $\varphi$ to
	\begin{align*}
	\varphi(x_{j}) &= x_j^{\deg y} \tensor y^{\deg x_{j}} \quad \text{for } x_{j} \in \cluster{1}\setminus \{x\}, \\
	\varphi(y_{j}) &= x^{\deg y_{j}} \tensor y_j^{\deg x} \quad \text{for } y_{j} \in \cluster{2}\setminus \{y\}\ \text{and}\\
	\varphi(z) &= x^{\deg y} \tensor y^{\deg x}
\end{align*}	
	in an attempt to avoid the $\deg x=\deg y=1$ assumption.  Note that one should however ask for $\deg x$ and $\deg y$ strictly positive, to avoid issues with needing inverses of arbitrary cluster variables.
	
	While this does indeed fix the issue with $d$ that occurs in the calculation in the above proof for $x_{k}\in \underline{y}_{1}$, the appearance of $x^{\deg y}$ in the first tensor factor means that we do not obtain $\mu_{k}(x_{k})$ unless $\deg y=1$.
	
	More explicitly, following the same approach as in the previous proof, one would arrive at
	\[ \frac{1}{x_k^{\deg y} \otimes y^{\deg x_{k}}} \Bigg[ \prod\limits_{x_{j}\in \underline{y}_{1}} x_j^{[C_{x_{j},x_{k}}]_{+}\deg y} \otimes y^{d} + \prod\limits_{x_{j}\in \underline{y}_{1}} x_j^{[C_{x_{j},x_{k}}]_{-}\deg y}\otimes y^{d}  \Bigg] \] but this is not equal to 
	\[ \frac{1}{x_k^{\deg y}}\Bigg(\prod\limits_{x_{j}\in \underline{y}_{1}} x_j^{[C_{x_{j},x_{k}}]_{+}}+ \prod\limits_{x_{j}\in \underline{y}_{1}} x_j^{[C_{x_{j},x_{k}}]_{-}}\Bigg)^{\deg y} \otimes y^{d-\deg x_{k}} \]
	if $\deg y\neq 1$.
	
	By symmetry, the other case tells us that we also need $\deg x=1$.  That is, the degree 1 assumption is unavoidable.	
\end{remark}

\begin{remark}
    Notice that in proving surjectivity, we required $\varphi(z)=x\tensor y$, and hence $x$ and $y$ themselves, to be invertible, but no other frozen variables needed to be invertible for the proof to hold.
\end{remark}

%\begin{corollary}
%    Let $\cA_{i}=(\cluster{i},\exch{i},B_{i},G_{i}=\mathds{1})$, $i=1,2$ be graded cluster algebras such that $\cluster{1}$ and $\cluster{2}$ are homogeneous of degree 1 and fix \(s_1 \in \oneton{n_{1}}\setminus \exch{1}\) and \(s_2 \in \oneton{n_{2}}\setminus \exch{2}\).
%
%    Then 
%    \begin{enumerate}[label=\textup{(\roman*)}]
%        \item the cluster algebras $\SegreCA{1}{2}$ and $\SegreCA{2}{1}$ are isomorphic as cluster algebras;
%        \item $\SegreCA{1}{2}$ is of finite type if and only if $\cA_{1}$ and $\cA_{2}$ are;
%        \item writing $\kappa(\cA)$ for the number of cluster variables of a cluster algebra $\cA$, we have $\kappa(\SegreCA{1}{2})=\kappa(\cA_{1})+\kappa(\cA_{2})-1$ when these numbers are all finite; and
%        \item writing $K(\cA)$ for the number of clusters of $\cA$, we have $K(\SegreCA{1}{2})=K(\cA_{1})K(\cA_{2})$ when these numbers are all finite. \qed
%    \end{enumerate} 
%\end{corollary}

\begin{remark}
	Via Lemma~\ref{l:box-prod-var-cl-corresp}, we see that the cluster structure on $\cA_{1}\Box \cA_{2}$ and hence that on $\SegreCA{1}{2}$ is independent of the choices of initial seeds.  Therefore the only requirements to obtain a cluster structure on the Segre product are the existence of a frozen variable of degree 1 for each factor.
	
	Graded cluster algebras with at least one frozen variable of degree one are, perhaps surprisingly, ubiquitous.  Many examples arising geometrically have this property: coordinate rings of Grassmannians and more generally partial flag varieties and their cells (\cite{GLS-KacMoody}), double Bruhat cells (\cite{BFZ-CA3}) and, as motivated this work, positroid varieties (\cite{GL}).
%    In the above theorem, we require the input clusters to be homogeneous of degree one. This was necessary to ensure that the image of \(\varphi\) generates the Segre product as defined. We note that this condition is very restrictive and it would be desirable for it to be weakened.  However, without it, it does not seem feasible to describe the image in as simple a fashion as for the standard Segre product.
    
%    We also observe that, as a result, the construction of $\cA_{1}\Box \cA_{2}$ and hence the cluster structure induced on $\SegreCA{1}{2}$ is strongly ``rooted'', i.e.\ dependent on a choice of initial clusters with specific properties.  Even in a graded cluster algebra that has a homogeneous cluster, only with very specific exchange matrices we will find that other clusters are also homogeneous.  
\end{remark}

Note too that the claims on the cluster structure of $\cA_{1}\Box \cA_{2}$ in Corollary~\ref{c:box-prod-fin-type} therefore also apply to the induced cluster structure on the Segre product. 

\bibliographystyle{halpha}
\bibliography{biblio}

\end{document}